\newtheorem{thm}{Theorem}[section]
\newtheorem{lem}[thm]{Lemma}
\newtheorem{prop}[thm]{Proposition}
\numberwithin{equation}{section}
\newcommand{\rt}{\mathbb{R}^2}
\newcommand{\les}{\lesssim}
\newcommand{\lam}{\lambda}
\newcommand{\vp}{\varphi}
\newcommand{\de}{\delta}
\newcommand{\al}{\alpha}
\newcommand{\epm}{{e^{\pm it\left<D\right>}}}
\newcommand{\emp}{{e^{\mp it\left<D\right>}}}
\newcommand{\pkm}{{P_k^1}}
\newcommand{\brad}{\left<D\right>}
\begin{document}

	\title[Dirac equations]{Small data scattering of Dirac equations with Yukawa type potentials in $L_x^2(\mathbb R^2)$}

	\author[Y. Cho]{Yonggeun Cho}
	\address{Department of Mathematics, and Institute of Pure and Applied Mathematics, Jeonbuk National University, Jeonju 54896, Republic of Korea}
	\email{changocho@jbnu.ac.kr}

	\author[K. Lee]{Kiyeon Lee}
	\address{Department of Mathematics, Jeonbuk National University, Jeonju 54896, Republic of Korea}
	\email{leeky@jbnu.ac.kr}

	\thanks{2010 {\it Mathematics Subject Classification.} 35Q55, 35Q40.}
	\thanks{{\it Keywords and phrases.} Dirac equations, Yukawa type potential, global well-posedness, small data scattering, null structure, $U^p-V^p$ space.}
	
	\begin{abstract}
 We revisit the Cauchy problem of nonlinear massive Dirac equation with Yukawa type potentials $\mathcal F^{-1}\left[(b^2 + |\xi|^2)^{-1}\right]$ in 2 dimensions. The authors of \cite{tes2d, geosha} obtained small data scattering and large data global well-posedness in $H^s$ for $s > 0$, respectively. In this paper we show that the small data scattering occurs in $L_x^2(\mathbb R^2)$. This can be done by combining bilinear estimates and modulation estimates of \cite{yang, tes2d}. 
	\end{abstract}

		\maketitle

\section{Introduction}
We consider the following Cauchy problem for an nonlinear Dirac Hartree-type equation:
\begin{eqnarray}\label{maineq}
	\left\{\begin{array}{l}
	(-i\partial_t + \al\cdot D + m\beta) \psi =  (V* \left<\psi,\beta\psi\right>)\beta\psi \;\; \mathrm{in}\;\; \mathbb{R}^{1+2}\\
	\psi(0) = \psi_0 \in L_x^2(\rt),
	\end{array} \right.
\end{eqnarray}
where $D=-i\cdot\nabla$, $\left< \cdot,\cdot\right> = \left< \cdot,\cdot\right>_{\mathbb{C}^{2}}$,  and $\psi:\mathbb{R}^{1+2} \to \mathbb{C}^2$ is the spinor field represented by a column vector. We define the Dirac matrices $\alpha, \beta$ by dimensions as follows:

\begin{align*}
\al = (\al^1, \al^2), \quad\al^1 = \left(\begin{array}{ll} 0 & \;1 \\ 1 & \;0 \end{array} \right), \quad \al^2 = \left(\begin{array}{ll} 0 & \;-i \\ i & \;\;\;0 \end{array} \right), \quad \beta = \left(\begin{array}{ll} 1 & \;\;\;0 \\ 0 & \;-1 \end{array} \right).
\end{align*}

The constant $m > 0$ is a physical mass parameter and the symbol $*$ denotes convolution in $\rt$ and the potential $V$ is defined by $\mathcal F^{-1}[(b^2 + |\xi|^2)^{-1}] $ for some fixed constant $ b > 0$. More explicitly, for a constant $a > 0$,   $$V(x) = a\int_0^\infty e^{-b^2r- |x|^2/4r}\frac{dr}{r} \sim \left\{\begin{array}{ll}
e^{-b|x|}|bx|^{-\frac12}& |x| \gtrsim 1,\\ - \ln|x| & |x| \ll 1.
\end{array}\right.$$

The equation \eqref{maineq} with Yukawa potential was derived by uncoupling the Dirac-Klein-Gordon system in $\mathbb{R}^{1+2}$:
\begin{eqnarray}\label{DKG}
\left\{\begin{array}{l}
(-i\partial_t + \al\cdot D + m\beta) \psi =  \phi\beta\psi,\\
(\partial_t^2 - \Delta +M^2)\phi = \left<\psi,\beta\psi\right>.
\end{array} \right.
\end{eqnarray}
Let us assume that the scalar field $\phi$ is a standing wave of the form $\phi(t,x) = e^{i\lam t}\rho(x)$. Then the Klein-Gordon part of \eqref{DKG} becomes
$$
(-\Delta -\lam^2 + M^2)\phi = \left<\psi,\beta\psi\right>.
$$
If $b^2 := M^2-\lambda^2 > 0$, then we get the equation \eqref{maineq}.

The equation \eqref{maineq} obeys mass conservation law. If a solution $\psi$ is sufficiently smooth, then the mass $\|\psi(t)\|_{L_x^2}^2$ is conserved, that is, $\|\psi(t)\|_{L_x^2}^2 = \|\psi_0\|_{L_x^2}^2$ for all $t$ within an existence time interval. See \cite{geosha}.

Now let us consider a scaled function $\widetilde \psi$ defined by $\widetilde \psi(t, x) = m^{-\frac32}\psi\left(\frac tm, \frac xm \right)$. Then by a direct calculation $\widetilde \psi$ satisfies the equation:
$(-i\partial_t + \al\cdot D + \beta) \widetilde \psi =  (\widetilde V* \langle \psi,\beta\widetilde\psi\rangle)\beta\widetilde\psi$, where $\widetilde V = \mathcal F^{-1}[(\frac{b^2}{m^2} + |\xi|^2)^{-1}]$. Since the changed potential is essentially the same type as $V$ up to constant, for the Cauchy problem \eqref{maineq} we assume that $m = 1$ in this paper. 

We use the representation of solution based on the massive Klein-Gordon equation. For this purpose, let us define the energy projection operators $\Pi_\pm(D)$ by
$$
\Pi_{\pm}(D) := \frac12 \left( I \pm \frac1{\brad}[\alpha \cdot D + \beta] \right),
$$
where $\left< D\right> := \mathcal F^{-1}\langle \xi \rangle\mathcal F$ and $\langle \xi \rangle := (1 + |\xi|^2)^\frac12$ for any $\xi \in \mathbb R^2$.
Then we get
\begin{align}
\al\cdot D + \beta = \brad (\Pi_{+}(D) - \Pi_{-}(D)),
\end{align}
and
\begin{align}\label{proj-commu}
\Pi_{\pm}(D)\Pi_{\pm}(D) = \Pi_{\pm}(D), \;\; \Pi_{\pm}(D)\Pi_{\mp}(D) = 0.
\end{align}
We denote $\Pi_\pm(D)\psi $ by $\psi_\pm$. Then the equation \eqref{maineq} becomes the following system of semi-relativistic Hartree equations:
\begin{align}\label{maineq-decom}
 (-i\partial_t \pm \brad)\psi_\pm = \Pi_{\pm}(D)[(V* \left<\psi,\beta\psi\right>)\beta\psi]
\end{align}
with initial data $\psi_{\pm}(0,\cdot) = \psi_{0, \pm} := \Pi_{\pm}(D)\psi_0$. The free solutions of \eqref{maineq-decom} are $e^{\mp it\brad}\psi_{0, \pm}$, respectively, where
$$
e^{\mp it\langle D\rangle}f(x) = \mathcal F^{-1}( e^{\mp it \langle \xi \rangle}\mathcal F f) = \frac1{(2\pi)^2}\int_{\mathbb R^2} e^{i(x\cdot \xi \mp t\langle \xi \rangle)}\widehat f(\xi)\,d\xi.
$$
Here $\mathcal F, \mathcal F^{-1}$ are Fourier transform, its inverse, respectively.
Then by Duhamel's principle the Cauchy problem \eqref{maineq-decom} is equivalent to solving the integral equations:
\begin{align}\label{inteq0}
\psi_\pm(t) = e^{\mp it\langle D\rangle}\psi_{0, \pm} + i\int_0^t e^{\mp i(t-t')\langle D\rangle}\Pi_{\pm}(D)[(V*\langle \psi, \beta \psi\rangle) \beta \psi](t')\,dt'.
\end{align}
We call that the solution $\psi$ scatters forward (or backward) in $H^s$ if there exist $\psi^\ell \in C(\mathbb R; H^s)$,  linear solutions to $(-i\partial_t + \alpha \cdot D + \beta)\psi = 0$, such that
\begin{align}\label{sense1}
\|\psi(t) - \psi^\ell(t)\|_{H^s} \to 0\;\;\mbox{as}\;\;t \to +\infty\; (-\infty,\;\;\mbox{respectively}).
\end{align}
Equivalently, $\psi$ is said to scatter forward ( or backward) in $H^s$ if there exist $\psi_{\pm }^\ell := e^{\mp it\brad}\varphi_{\pm } \;(\varphi_{\pm} \in  H^s)$ such that
\begin{align}\label{sense2}
\|\psi_\pm(t) - \psi_{\pm}^\ell(t)\|_{H^s} \xrightarrow {t \to \pm \infty} 0.
\end{align}



Recently,  Yang \cite{yang} and Tesfahun \cite{tes3d} showed, independently, small data scattering results on $H^s(\mathbb{R}^3)$ for $s > 0$ in $3$ dimensions. They developed the bilinear methods based on the null structure and $U^p-V^p$ space. At the same time, Tesfahun \cite{tes2d} considered 2d problem \eqref{maineq} and obtained the scattering in $H^s(\mathbb R^2) (s > 0)$. In \cite{geosha} the global well-posedness was shown in $H^s(\mathbb{R}^2)$ for $s>0$ without the smallness of initial data. In \cite{grupec} the authors considered the global well-posedness of 2d Dirac-Klein-Gordon system with data in $L_x^2 \times H^\frac12\times H^{-\frac12}$. There has not been known about the global well-posedness and scattering in $L_x^2$ of the single equation \eqref{maineq}. In this paper, we tackle the scattering problem in $L_x^2(\mathbb{R}^2)$  and obtain the following theorem.
\begin{thm}\label{mainthm}
If $\|\psi_0\|_{L_x^2}$ is sufficiently small, then there exists a unique global solution $\psi \in C(\mathbb{R};L_x^2)$ to \eqref{maineq}, which scatters in $L_x^2$.
\end{thm}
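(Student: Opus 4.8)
The plan is a contraction-mapping argument for the Duhamel system \eqref{inteq0} in a frequency-localized function space built from the $U^2$-spaces adapted to the two half-wave propagators. For a dyadic $N$ let $P_N$ be the Littlewood--Paley projection and set
$$
\|\psi\|_{S} := \Big(\sum_{\pm}\sum_{N}\|P_N\psi_\pm\|_{U^2_\pm}^2\Big)^{\frac12},
$$
where $U^2_\pm$ is the atomic $U^2$-space adapted to $e^{\mp it\langle D\rangle}$. On free data this norm is comparable to $\|\psi_0\|_{L_x^2}$ by the $\ell^2$ square-function characterization of $L^2$, and $S\hookrightarrow C(\mathbb R; L_x^2)$. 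The homogeneous estimate $\|e^{\mp it\langle D\rangle}\psi_{0,\pm}\|_S\lesssim\|\psi_0\|_{L_x^2}$ is immediate, while $U^p$--$V^p$ duality bounds the Duhamel term by the adapted dual norm $\|\cdot\|_N$ (which pairs with $V^2_\pm$), summed over sign and frequency as in $S$, of the cubic nonlinearity. Hence Theorem~\ref{mainthm} reduces to the trilinear estimate
$$
\big\|\Pi_{\pm}(D)\big[(V*\langle\psi,\beta\psi\rangle)\,\beta\psi\big]\big\|_{N}\lesssim\|\psi\|_S^3
$$
together with its polarized (difference) version; with these in hand, for $\|\psi_0\|_{L_x^2}$ small the map $\psi\mapsto e^{\mp it\langle D\rangle}\psi_{0,\pm}+(\text{Duhamel})$ is a contraction on a small ball of $S$, yielding the unique global solution. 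Scattering follows because the Duhamel integral converges in $L_x^2$ as $t\to\pm\infty$, producing asymptotic states $\varphi_\pm$ with $\|\psi_\pm(t)-e^{\mp it\langle D\rangle}\varphi_\pm\|_{L_x^2}\to0$ as in \eqref{sense2}.

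To prove the trilinear estimate, decompose $\psi=\sum_s\psi_s$ with $\psi_s=\Pi_s(D)\psi$ and localize all four spinor factors and the quadratic form in frequency: the Hartree scalar $\langle\psi_{s_1},\beta\psi_{s_2}\rangle$ lives at inner frequencies $N_1,N_2$ and output frequency $L\lesssim\max(N_1,N_2)$, and the convolution with $V$ contributes the multiplier $(b^2+|\xi|^2)^{-1}\lesssim\langle L\rangle^{-2}$ there; the outer spinor $\beta\psi_{s_3}$ is localized at $N_3$ and the total output, carrying $\Pi_{s_0}(D)$, at $N_0\lesssim\max(L,N_3)$. By $U^p$--$V^p$ duality the estimate is reduced to bounding a quadrilinear space--time integral
$$
\Big|\int_{\mathbb R^{1+2}}\big(V*\langle\psi_{s_1},\beta\psi_{s_2}\rangle\big)\,\langle\beta\psi_{s_3},\psi_{s_0}\rangle\,dt\,dx\Big|,
$$
which we split by Cauchy--Schwarz in $(t,x)$ into a product of two spinorial bilinear $L^2_{t,x}$ norms, one of which gains $\langle L\rangle^{-2}$ from $V$. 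In each factor the Dirac null structure enters: the symbol of $\Pi_{s}(\xi)\beta\Pi_{s'}(\eta)$ carries an angular factor vanishing in the resonant directions, which improves the bare bilinear Klein--Gordon $L^2_{t,x}$ estimate in $\mathbb R^2$. The free-wave bilinear estimates are transferred to $U^2$ by the transference principle, and the non-transversal (high-modulation) configurations, where the free bilinear bound degenerates, are handled by the modulation estimates of \cite{tes2d, yang} together with the embedding $V^2\hookrightarrow U^p$, $p>2$. This yields bilinear bounds of the schematic form
$$
\|\langle\psi_{s_i},\beta\psi_{s_j}\rangle\|_{L^2_{t,x}}\lesssim C(N_i,N_j)\,\|P_{N_i}\psi_{s_i}\|_{U^2}\,\|P_{N_j}\psi_{s_j}\|_{U^2},
$$
with $C(N_i,N_j)$ built from $\min(N_i,N_j)^{1/2}$, negative powers of $\langle\min(N_i,N_j)\rangle$, and at worst logarithms of the frequency ratios. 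Multiplying the two bilinear bounds, inserting the $\langle L\rangle^{-2}$ gain, and summing over $N_0,N_1,N_2,N_3,L$ under the frequency constraints (using Schur's test and the $\ell^2$ structure of $S$ where needed) closes the trilinear estimate at the scaling-critical level $s=0$.

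The main obstacle is exactly this sharp trilinear estimate at $s=0$: in the $H^s$, $s>0$, theorem of \cite{tes2d} one has a positive power of frequency to spare, which absorbs all logarithmic losses and makes the dyadic summation automatic, whereas at $s=0$ every gain must be accounted for exactly. Thus one must extract the full angular cancellation from the Dirac projectors and combine it with the modulation estimates to compensate for the weak Klein--Gordon dispersion in two dimensions, and then verify that the remaining (at worst logarithmic) losses are summable once the $\langle L\rangle^{-2}$ smoothing of $V$ is taken into account. Balancing the null gain against the 2D dispersion with no room to spare is the crux of the argument; the difference estimate needed for uniqueness and for the contraction follows along the same lines by multilinearity.
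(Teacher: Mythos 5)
Your proposal follows essentially the same route as the paper: a contraction in the $\ell^2$-dyadic $U^2_\pm$ space, $U^2$--$V^2$ duality to reduce the Duhamel term to a quadrilinear space--time integral, Cauchy--Schwarz into two spinorial bilinear $L^2_{t,x}$ factors (one inheriting the $\langle L\rangle^{-2}$ gain from the Yukawa multiplier), and then Yang/Tesfahun-type bilinear and modulation estimates with transference, followed by dyadic summation. One precision the paper makes that your sketch glosses over: the paper deliberately avoids Tesfahun's logarithmic $U^p$ interpolation (which costs an $\varepsilon$ of regularity in the high-high-low piece) and instead uses Yang's $V^2$-based bilinear bound $\|P_k\langle\psi_1,\beta\psi_2\rangle\|_{L^2_{t,x}}\lesssim 2^{k-k_1/2}$ for equal signs (resp.\ $2^{k/2}$ for opposite signs), giving a clean negative power of the \emph{high} input frequency rather than ``$\min(N_i,N_j)^{1/2}$ times logs''; it is this power gain, not the Yukawa factor alone, that makes the high-high summation close at $s=0$, and the Yukawa $\langle L\rangle^{-2}$ is needed only to control the sum over the low output frequency $L$ of the Hartree scalar. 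Your claim that the remaining logarithmic losses are absorbable by $\langle L\rangle^{-2}$ would not close by itself in the $L\ll N_1\sim N_2$ regime, since $\langle L\rangle^{-2}$ has no decay in $N_1$; you implicitly need the clean $2^{-k_1/2}$ gain, which is exactly what the paper's Proposition~\ref{prop-bilinear} supplies.
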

We show Theorem \ref{mainthm} by adopting the bilinear method of Yang and Tesfahun. Tesfahun's method relies on the logarithmic interpolation between $U^p$ spaces, which results in $\varepsilon$-regularity loss for the high-high-low interaction part. To overcome it we use Yang's bilinear estimates on the $V^2$ space and fast decay in frequency of 2d Yukawa potential. Unfortunately, our method cannot be applied to 3d problem directly because the bilinear estimate is not strong enough to remove the $\varepsilon$-regularity loss. The 3d scattering problem remains still open in $L_x^2$ and will be treated as a future.

\noindent{\bf Notations.}
\\

\noindent $(1)$ $\|\cdot\|$ denotes $\|\cdot\|_{L_{t,x}^2}$.\\

\noindent$(2)$ (Mixed-normed spaces) For a Banach space $X$ and an interval $I$, $u \in L_I^q X$ iff $u(t) \in X$ for a.e.$t \in I$ and $\|u\|_{L_I^qX} := \|\|u(t)\|_X\|_{L_I^q} < \infty$. Especially, we denote  $L_I^qL_x^r = L_t^q(I; L_x^r(\rt))$, $L_{I, x}^q = L_I^qL_x^q$, $L_t^qL_x^r = L_{\mathbb R}^qL_x^r$.\\

\noindent$(3)$ (Littlewood-Paley operators) Let $\rho$ be a Littlewood-Paley function such that $\rho \in C^\infty_0(B(0, 2))$ with $\rho(\xi) = 1$  for $|\xi|\le 1$ and define $\rho_{k}(\xi):= \rho\left(\frac {\xi}{2^k}\right) - \rho\left(\frac{\xi}{2^{k-1}}\right)$ for $k \in \mathbb{Z}$. Then we define the frequency projection $P_k$ by $\mathcal{F}(P_k f)(\xi) = \rho_{k}(\xi)\widehat{f}(\xi)$, and also $P_{\le k} := I - \sum_{k' > k}P_{k'}$. In addition $P_{k_1 \le \cdot \le k_2} := \sum_{k_1 \le k \le  k_2}P_k$. For $ k \in \mathbb{Z}$ we denote $\widetilde{\rho_k} = \rho_{k-1} + \rho_k + \rho_{k+1}$. In particular, $\widetilde{P_k}P_k = P_k\widetilde{P_k} = P_k$, where $\widetilde{P_k} = \mathcal{F}^{-1}\widetilde{\rho_k}\mathcal{F}$. Next we define a Fourier localization operators $P_k^1$ as follow:
$$
P_k^1 f =  \left\{ \begin{array}{ll} 0  &\;\;\;\mbox{if}\;\; k < 0 ,\\ P_{\le 0}f &\;\;\;\mbox{if}\;\; k = 0 ,\\ P_k f & \;\;\;\mbox{if}\;\; k > 0.   \end{array} \right .
$$
Especially, we denote $P_k^1 f $ by $f_{k}$ for any measurable function $f$.\\


\noindent$(4)$ As usual different positive constants depending only on $a, b$ are denoted by the same letter $C$, if not specified. $A \lesssim B$ and $A \gtrsim B$ mean that $A \le CB$ and
$A \ge C^{-1}B$, respectively for some $C>0$. $A \sim B$ means that $A \lesssim B$ and $A \gtrsim B$.

\section{Function spaces}

We explain concisely $U^p-V^p$ spaces. For more details, we refer the readers to \cite{ haheko,haheko2, kota, kotavi}.
Let $1 \le p < \infty$ and $\mathcal{I}$ be a collection of finite partitions $\{ t_0 , \cdots, t_N\}$ satisfying $-\infty < t_0 < \cdots < t_N \le \infty$. If $t_N =\infty$, by convention, $u(t_N) := 0$ for any $u : \mathbb R \to L_x^2(\mathbb R^2)$.
Let us define a $U^p$-atom by a step function $a : \mathbb{R} \to L_x^2$ of the form
$$
a(t) = \sum_{k=1}^N \chi_{[t_{k-1},t_k)}\phi(t) \;\;\mbox{with}\;\; \sum_{k=1}^N \|\phi\|_{L_x^2}^p=1.
$$
Then the $U^p$ space is defined by
$$
U^p = \left\{ u = \sum_{j=1}^\infty \lam_j a_j : \mbox{$a_j$ are $U^p$-atoms and $\{\lam_j\} \in \ell^1$ },  \|u\|_{U^p}< \infty \right\},
$$
where the $U^p$-norm is defined by
$$
\|u\|_{U^p}:= \inf_{\mbox{representation of $u$}} \;\;\sum_{j=1}^\infty|\lam_j|.
$$
We next define $V^p$ as the space of all right-continuous functions $v : \mathbb{R} \to L_x^2$ satisfying that $\underset{t \to -\infty}{\lim}v(t) =0$ and the norm
$$
\|v\|_{V^p} := \sup_{\{t_k\} \in \mathcal{I}} \left( \sum_{k=1}^N \|v(t_k)  - v(t_{k-1})\|_{L_x^2}^p \right)^{\frac1p}
$$
is finite.

We introduce several key properties of $U^p$ and $V^p$ spaces.
\begin{lem}[\cite{haheko}]\label{embedd} Let $1 \le p < q <\infty$. Then the following holds.
	\item[(i)] $U^p$ and $V^p$ are Banach spaces.
	\item[(ii)] The embeddings $U^p \hookrightarrow V^p \hookrightarrow U^q \hookrightarrow L^{\infty}(\mathbb{R};L_x^2) $ are continuous.
\end{lem}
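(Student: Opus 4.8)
The statement is the foundational structure theory of $U^p$ and $V^p$ spaces, and I would reproduce the arguments of Hadac--Herr--Koch \cite{haheko}. The one preliminary I would record first, because it is used throughout, is the elementary embedding into $L^\infty(\mathbb R;L_x^2)$: a $U^p$-atom $a$ takes values $\phi$ with $\|\phi\|_{L_x^2}^p\le1$, so $\|a\|_{L_t^\infty L_x^2}\le1$, whence $\|u\|_{L_t^\infty L_x^2}\le\sum_j|\lam_j|$ for any representation $u=\sum_j\lam_j a_j$ and, taking the infimum, $\|u\|_{L_t^\infty L_x^2}\le\|u\|_{U^p}$; and for $v\in V^p$, using the two-point partition $\{s,t\}$ and letting $s\to-\infty$ (so $v(s)\to0$) gives $\|v(t)\|_{L_x^2}\le\|v\|_{V^p}$, i.e. $V^p\hookrightarrow L_t^\infty L_x^2$ with constant one. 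In particular the last embedding in (ii) is immediate.

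For (i) the norm axioms are routine: homogeneity is clear, the triangle inequality in $U^p$ holds because juxtaposing atomic representations of $u$ and $w$ gives one of $u+w$, the triangle inequality in $V^p$ follows from Minkowski's inequality in $\ell^p$ on each fixed partition followed by the supremum, and definiteness in $V^p$ uses that a vanishing $V^p$-norm forces $v$ constant, which with $v(-\infty)=0$ gives $v\equiv0$. For completeness of $U^p$ I would use the absolutely-convergent-series criterion: given $\sum_j\|u_j\|_{U^p}<\infty$, choose atomic representations $u_j=\sum_k\lam_{jk}a_{jk}$ with $\sum_k|\lam_{jk}|\le\|u_j\|_{U^p}+2^{-j}$, note $\sum_{j,k}|\lam_{jk}|<\infty$ so that $u:=\sum_{j,k}\lam_{jk}a_{jk}$ (the series converging in $L_t^\infty L_x^2$ by the previous step) lies in $U^p$, and observe $\|u-\sum_{j\le N}u_j\|_{U^p}\le\sum_{j>N}(\|u_j\|_{U^p}+2^{-j})\to0$. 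For $V^p$, a Cauchy sequence $(v_n)$ converges in $L_t^\infty L_x^2$ to some $v$, which is right-continuous and satisfies $v(-\infty)=0$ as a uniform limit of functions with these properties; since for every fixed partition the finite $\ell^p$-sum built from $v-v_m$ is the limit in $n$ of the corresponding sums for $v_n-v_m$, passing to the supremum over partitions yields both $v\in V^p$ and $\|v-v_m\|_{V^p}\le\sup_{n\ge m}\|v_n-v_m\|_{V^p}\to0$.

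For $U^p\hookrightarrow V^p$ I would estimate a single $U^p$-atom $a$ with values $\phi_1,\dots,\phi_N$ on successive intervals: along any evaluation partition the values $a(\tau_j)$ form a sequence $0,\phi_{i_1},\dots,\phi_{i_r},0$ with $i_1\le\cdots\le i_r$, so after discarding the repeated (hence vanishing-increment) terms the increments are among $\pm\psi_s$ and $\psi_s-\psi_{s-1}$, where $\psi_1,\dots,\psi_L$ are distinct elements of $\{\phi_1,\dots,\phi_N\}$; by the convexity bound $\|x-y\|^p\le2^{p-1}(\|x\|^p+\|y\|^p)$, each $\psi_s$ occurring in at most two increments, the total is $\le2^p\sum_s\|\psi_s\|^p\le2^p\sum_i\|\phi_i\|^p=2^p$. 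Hence $\|a\|_{V^p}\le2$, and the triangle inequality together with the infimum over representations gives $\|u\|_{V^p}\le2\|u\|_{U^p}$.

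The substantive part of (ii) is $V^p\hookrightarrow U^q$ for $p<q$, and this is the step I expect to be the main obstacle, chiefly because of the care needed with right-continuity and with the infima defining the partition times. Normalizing $\|v\|_{V^p}=1$, for each $k\ge1$ set $t_0^k:=-\infty$ and recursively $t_{j+1}^k:=\inf\{t>t_j^k:\|v(t)-v(t_j^k)\|_{L_x^2}\ge2^{-k}\}$ (equal to $+\infty$ if this set is empty); right-continuity gives $\|v(t_{j+1}^k)-v(t_j^k)\|_{L_x^2}\ge2^{-k}$ whenever $t_{j+1}^k<\infty$, so comparing with $\|v\|_{V^p}^p=1$ shows this partition is finite, with $N_k\le2^{kp}+1$ points. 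Letting $v^k$ be the step function equal to $v(t_j^k)$ on $[t_j^k,t_{j+1}^k)$ one gets $\|v-v^k\|_{L_t^\infty L_x^2}\le2^{-k}$, hence $v=\sum_{k\ge1}(v^k-v^{k-1})$ with $v^0:=0$, the series converging in $L_t^\infty L_x^2$; each $v^k-v^{k-1}$ is a finite step function taking at most $N_k+N_{k-1}\lesssim2^{kp}$ values, all of $L_x^2$-norm $\le3\cdot2^{-k}$, so writing it as $(\sum\|\text{values}\|_{L_x^2}^q)^{1/q}$ times a $U^q$-atom gives $\|v^k-v^{k-1}\|_{U^q}\lesssim(2^{kp})^{1/q}\,2^{-k}=2^{-k(1-p/q)}$. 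Since $p<q$ this is summable in $k$, so $v\in U^q$ with $\|v\|_{U^q}\lesssim\|v\|_{V^p}$, which completes the proof of (ii).
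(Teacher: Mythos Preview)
Your proof is correct and reproduces the standard Hadac--Herr--Koch argument; the paper itself does not prove this lemma but simply quotes it from \cite{haheko}. So there is no paper proof to compare against, and your write-up goes beyond what the paper provides.
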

These spaces have the useful duality property.
\begin{lem}[Corollary of \cite{kotavi}]\label{duality} Let $u \in U^p$ be absolutely continuous with $1 < p < \infty$. Then
	$$
	\|u\|_{U^p} = \sup \left\{ \int \left<u', v \right>_{L_x^2} dt : v \in C_0^{\infty}, \;\; \|v\|_{V^{p'}}=1 \right\}.
	$$
\end{lem}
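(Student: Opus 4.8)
The plan is to read off the identity from the known duality $(U^p)^* \cong V^{p'}$ of \cite{kotavi}. Write $B$ for the bounded bilinear form on $U^p \times V^{p'}$ obtained as the limit, over refinements of the partition, of the pairings $B_{\{t_k\}}(u,v) := \sum_{k=1}^{N} \langle u(t_k) - u(t_{k-1}),\, v(t_{k-1})\rangle_{L_x^2}$; the content of \cite{kotavi} is that $v \mapsto B(\cdot,v)$ is an isometric isomorphism $V^{p'} \to (U^p)^*$, and in particular $|B(u,v)| \le \|u\|_{U^p}\,\|v\|_{V^{p'}}$. Given this, the proof reduces to two ingredients: (a) a representation of $B(u,\cdot)$ as an honest integral when $u$ is absolutely continuous, and (b) an approximation statement allowing the test function to be taken in $C_0^\infty$ without changing the supremum.

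For (a): if $u$ is absolutely continuous then $u(t_k) - u(t_{k-1}) = \int_{t_{k-1}}^{t_k} u'(s)\,ds$, so $B_{\{t_k\}}(u,v) = \int_{\mathbb R}\langle u'(s),\, v_{\{t_k\}}(s)\rangle_{L_x^2}\,ds$, where $v_{\{t_k\}}$ is the step function equal to $v(t_{k-1})$ on $[t_{k-1},t_k)$ and $0$ outside the span of the partition. Since $\|v(\cdot)\|_{L_x^2} \le \|v\|_{V^{p'}}$ by the embedding $V^{p'}\hookrightarrow L^\infty(\mathbb R; L_x^2)$ of Lemma \ref{embedd}, and $v$ has at most countably many discontinuities, one has $v_{\{t_k\}} \to v$ pointwise a.e.\ along a refining sequence with vanishing mesh; passing to the limit identifies $B(u,v)$ with the (possibly improper) integral $\int_{\mathbb R}\langle u'(s),\, v(s)\rangle_{L_x^2}\,ds$ appearing in the Lemma. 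The inequality ``$\ge$'' is then immediate: for $v \in C_0^\infty$ with $\|v\|_{V^{p'}}=1$ we get $|\int \langle u',v\rangle_{L_x^2}\,dt| = |B(u,v)| \le \|u\|_{U^p}$, and since the admissible family of $v$ is invariant under multiplication by unimodular constants, the supremum of $\int\langle u',v\rangle_{L_x^2}\,dt$ equals that of its modulus.

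For ``$\le$'' I would combine Hahn--Banach with the duality: $\|u\|_{U^p} = \sup\{|L(u)| : L\in (U^p)^*,\ \|L\|\le 1\} = \sup\{|B(u,v)| : v\in V^{p'},\ \|v\|_{V^{p'}}\le 1\}$, which by (a) equals $\sup\{|\int\langle u',v\rangle_{L_x^2}\,dt| : v\in V^{p'},\ \|v\|_{V^{p'}}\le 1\}$. It remains to show that this supremum is unchanged when $v$ is restricted to $C_0^\infty$, which I would do by a two-step approximation of a near-optimal $v$. First replace $v$ by the compactly supported step function $\tilde v = v_{\{t_k\}}\mathbf 1_{[t_0,t_N)}$ attached to a near-optimal finite partition: one has $\int\langle u',\tilde v\rangle_{L_x^2}\,dt = B_{\{t_k\}}(u,v) \approx B(u,v)$ by (a), and $\|\tilde v\|_{V^{p'}} \le \|v\|_{V^{p'}}$, because every partition sum for the step function $\tilde v$ is matched, after deleting repeated values and using the convention $v(\pm\infty)=0$, by a partition sum for $v$ itself. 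Then mollify $\tilde v$ in $t$ and regularize it in $x$ (convolution with approximate identities, followed by a spatial cutoff): each such operation keeps the support compact and the function smooth, does not increase the $V^{p'}$-norm --- by Minkowski's inequality, $\|\tilde v * \eta\|_{V^{p'}} \le \int\eta(h)\,\|\tilde v(\cdot - h)\|_{V^{p'}}\,dh = \|\tilde v\|_{V^{p'}}$ --- and recovers the value $\int\langle u',\cdot\rangle_{L_x^2}\,dt$ in the limit by dominated convergence on the fixed compact time support. Normalizing the resulting $C_0^\infty$ function to have $V^{p'}$-norm exactly $1$ (which only increases the pairing) yields test functions realizing $\|u\|_{U^p}$ in the limit.

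The step I expect to be the main obstacle is this last approximation together with its compatibility with (a): an absolutely continuous $u\in U^p$ need not satisfy $u'\in L^1(\mathbb R; L_x^2)$ (only locally), so the reduction to a \emph{compactly supported} step function must precede any mollification, and the convergence of the various integrals has to be read through the convergent improper integral (with vanishing tails) rather than naive dominated convergence on all of $\mathbb R$; moreover, obtaining the compactly supported step-function approximant without inflating the $V^{p'}$-norm is delicate --- a crude time truncation would add the boundary jumps $\|v(\pm R)\|_{L_x^2}$, which need not be small, to the variation --- and is where one must use the $v(\pm\infty)=0$ convention built into the definition of $\|\cdot\|_{V^{p'}}$. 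These are precisely the technical points carried out in \cite{kotavi}; beyond Lemma \ref{embedd} (and the Banach-space property of $V^{p'}$ recorded there), the only structural input is the isometric duality $(U^p)^*\cong V^{p'}$.
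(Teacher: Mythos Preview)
The paper does not supply its own proof of this lemma: it is quoted verbatim as a corollary of \cite{kotavi} and used as a black box. So there is no ``paper's proof'' to compare against; the relevant comparison is with the argument in \cite{kotavi} itself.

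Your reconstruction is faithful to that source. The isometric duality $(U^p)^* \cong V^{p'}$ via the bilinear form $B$, the identification $B(u,v) = \int \langle u', v\rangle_{L_x^2}\,dt$ for absolutely continuous $u$, and the reduction of the supremum to smooth compactly supported $v$ are exactly the ingredients in \cite{kotavi}; you have also correctly flagged the genuine technical point, namely that a naive time cutoff can inflate the $V^{p'}$-norm and that one must instead pass through a near-optimal finite-partition step function before mollifying. Nothing is missing at the level of strategy, and the caveats you list are the right ones.
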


Now let us define the adapted function spaces $U_{\pm}^p,\; V_{\pm}^p$ as follows:
$$
\|u\|_{U_{\pm}^p} : =\|\epm u\|_{U^p} \;\;\mbox{and}\;\;  \|u\|_{V_{\pm}^p} : =\|\epm u\|_{V^p}.
$$

\begin{prop}[Transfer principle, Proposition 2.19 of \cite{haheko}]\label{transfer} Let
	$$
	T : L_x^2 \times L_x^2 \times \cdots \times L_x^2 \to L_{loc}^1
	$$
	be a multilinear operator. If
	$$
	\left\|T\left( e^{\pm_1 it\brad}f_1 , e^{\pm_2 it\brad}f_2 , \cdots ,e^{\pm_k it\brad}f_k \right)\right\|_{L_t^qL_x^r} \les \prod_{j=1}^{k}\|f_j\|_{L_x^2}
	$$
	for some $1 \le q, r \le \infty$ and $\pm_j \in \{\pm\}$, then we have
	$$
	\|T(u_1, u_2,\cdots, u_k)\|_{L_t^q L_x^r} \les \prod_{j=1}^k \|u_j\|_{U_{\pm_j }^q}.
	$$
\end{prop}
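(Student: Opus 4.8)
The plan is to prove the transfer principle by exploiting the atomic structure of the adapted spaces $U_{\pm_j}^q$: their atoms are, by construction, finite superpositions of truncated free solutions, so expanding each $u_j$ into atoms reduces the estimate, cell by cell, to the hypothesis, which only sees free solutions. (Throughout, the signs are matched so that the hypothesis is applied to exactly the free solutions that appear.) Concretely, fix $\ve>0$. Since $\|u_j\|_{U_{\pm_j}^q}=\|e^{\pm_j it\langle D\rangle}u_j\|_{U^q}$, choose atomic representations $e^{\pm_j it\langle D\rangle}u_j=\sum_m\lam_m^j a_m^j$ with $U^q$-atoms $a_m^j$ and $\sum_m|\lam_m^j|\le\|u_j\|_{U_{\pm_j}^q}+\ve$; undoing the free flow, $u_j=\sum_m\lam_m^j b_m^j$, where each $b_m^j$ is a finite step function $b_m^j(t)=\sum_n\chi_{I_n^{j,m}}(t)\,e^{\mp_j it\langle D\rangle}\phi_n^{j,m}$ with $\sum_n\|\phi_n^{j,m}\|_{L_x^2}^q=1$. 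By multilinearity of $T$ and the triangle inequality in $L_t^qL_x^r$, the claim follows once we establish the uniform bound $\|T(b_1,\dots,b_k)\|_{L_t^qL_x^r}\les 1$ over all adapted atoms $b_1,\dots,b_k$ of this form: then the $k$-fold series converges absolutely and factorizes, yielding $\|T(u_1,\dots,u_k)\|_{L_t^qL_x^r}\les\prod_j\big(\sum_m|\lam_m^j|\big)\le\prod_j\big(\|u_j\|_{U_{\pm_j}^q}+\ve\big)$, and $\ve\downarrow 0$ concludes.

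For the uniform atomic bound I would pass to a common refinement of the time partitions. Let $b_j=\sum_n\chi_{I_n^j}(t)\,e^{\mp_j it\langle D\rangle}\phi_n^j$ with $\sum_n\|\phi_n^j\|_{L_x^2}^q=1$. The nonempty cells $J_{\vec{n}}:=I_{n_1}^1\cap\dots\cap I_{n_k}^k$, labelled by tuples $\vec{n}=(n_1,\dots,n_k)$, are pairwise disjoint intervals covering $\mathbb R$, and on $J_{\vec{n}}$ each $b_j$ coincides with the free solution $e^{\mp_j it\langle D\rangle}\phi_{n_j}^j$. Since $T$ acts fiberwise in $t$, the cutoff $\chi_{J_{\vec{n}}}$ pulls through $T$ by multilinearity, so that $\chi_{J_{\vec{n}}}T(b_1,\dots,b_k)=\chi_{J_{\vec{n}}}T\big(e^{\mp_1 it\langle D\rangle}\phi_{n_1}^1,\dots,e^{\mp_k it\langle D\rangle}\phi_{n_k}^k\big)$. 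Using additivity of $\|\cdot\|_{L_t^qL_x^r}^q$ over the disjoint time intervals $J_{\vec{n}}$ (we take $q<\infty$; the case $q=\infty$ is the obvious variant with a supremum), then discarding the cutoffs and invoking the hypothesis termwise,
\begin{align*}
\|T(b_1,\dots,b_k)\|_{L_t^qL_x^r}^q
&=\sum_{\vec{n}}\Big\|\chi_{J_{\vec{n}}}\,T\big(e^{\mp_1 it\langle D\rangle}\phi_{n_1}^1,\dots,e^{\mp_k it\langle D\rangle}\phi_{n_k}^k\big)\Big\|_{L_t^qL_x^r}^q\\
&\les\sum_{\vec{n}}\prod_{j=1}^k\|\phi_{n_j}^j\|_{L_x^2}^q\le\prod_{j=1}^k\Big(\sum_{n_j}\|\phi_{n_j}^j\|_{L_x^2}^q\Big)=1,
\end{align*}
which is exactly what is needed. (If an atom vanishes on a cell, the corresponding $\phi_{n_j}^j$ is $0$ and that term contributes nothing.)

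The one genuinely delicate point is the step $\sum_{\vec{n}}\prod_j\|\phi_{n_j}^j\|_{L_x^2}^q\le\prod_j\big(\sum_{n_j}\|\phi_{n_j}^j\|_{L_x^2}^q\big)$ in the last line: it works precisely because the refinement cells are naturally indexed by \emph{tuples}, so the sum over cells is dominated by the product of the individual atom normalizations, each equal to $1$, with no loss; indexing the cells one at a time would allow each $\|\phi_n^j\|_{L_x^2}^q$ to recur many times and the estimate would break. Everything else is routine bookkeeping, handled as in \cite{haheko}: the absolute convergence of the multilinear expansion, the approximation of a general $U^q$ element by finite truncations of an atomic representation, and the precise fiberwise-in-$t$ sense in which $T$ commutes with time cutoffs.
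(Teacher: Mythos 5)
Your proof is correct and reproduces the standard argument from Hadac--Herr--Koch (the paper itself does not reprove Proposition~2.19 of \cite{haheko}; it only cites it). The logical skeleton is exactly right: expand $e^{\pm_j it\brad}u_j$ in $U^q$-atoms, observe that the pullback of a $U^q$-atom along the free flow is a step function in $t$ whose values on each step are free solutions, pass to the common refinement of the $k$ time partitions so that on each refinement cell all $k$ inputs simultaneously coincide with free solutions, use that $T$ acts fiberwise in $t$ to pull the cutoff through, apply the hypothesis cell by cell, and resum. The one point you flag as delicate is handled correctly: since the cells of the common refinement are naturally indexed by tuples $\vec n=(n_1,\dots,n_k)$ with $J_{\vec n}\neq\emptyset$, the bound $\sum_{\vec n}\prod_j\|\phi_{n_j}^j\|_{L_x^2}^q\le\prod_j\sum_{n_j}\|\phi_{n_j}^j\|_{L_x^2}^q=1$ is just Tonelli for finite sums applied to the full tuple set, which dominates the restricted one; and the additivity $\|F\|_{L_t^qL_x^r}^q=\sum_{\vec n}\|\chi_{J_{\vec n}}F\|_{L_t^qL_x^r}^q$ over a partition of $\mathbb R$ in $t$ is valid. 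You are also right that the absolute convergence of the $k$-fold atomic expansion and its commutation with $T$ is a bookkeeping matter treated in \cite{haheko}; acknowledging it rather than belaboring it is reasonable here.

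One small remark, which concerns the paper's statement rather than your argument: with the convention $\|u\|_{U_{\pm}^q}=\|e^{\pm it\brad}u\|_{U^q}$, the adapted atom is $e^{\mp_j it\brad}a$, so on each cell $u_j$ coincides with a free solution of the form $e^{\mp_j it\brad}\phi$, exactly as you write. The hypothesis as printed uses $e^{\pm_j it\brad}f_j$; for the proposition to apply verbatim the signs in the hypothesis and in the adapted-space norm should be opposite (or one should read $\pm_j$ as ranging over both signs so that the hypothesis covers both). Your proof silently uses the correct free-flow sign $e^{\mp_j it\brad}$, which is what the transfer needs, so the mathematics is sound; the apparent mismatch is a sign-convention slip in the statement, not a gap in your argument.
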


\section{Bilinear estimates}

In this section, we list basic bilinear estimates based on the estimates of \cite{tes2d, tes3d, yang}.

\begin{lem}\label{ltwo-esti}
	Let $k_j \in \mathbb{Z}$, $\psi_j \in V_{\pm_j}^2\;(j = 1, 2)$, and   $\Pi_{\pm_j}(D)P_{k_j}^1\psi_j = \psi_j$.  Then
\begin{align*}
\left\|\left< \psi_1 , \beta \psi_2 \right> \right\| \les 2^{p k_1 + (1- p)k_2}\|\psi_1\|_{V_{\pm_1}^2}\|\psi_2\|_{V_{\pm_2}^2}
		\end{align*}
for any $0 <  p < 1$.
\end{lem}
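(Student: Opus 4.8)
Since $\beta^{\ast}=\beta$, we have $|\langle\psi_1,\beta\psi_2\rangle|=|\langle\psi_2,\beta\psi_1\rangle|$ pointwise, so the left-hand side is symmetric in the two factors and we may assume $k_1\le k_2$ (and $k_1,k_2\ge 0$, since otherwise a factor vanishes). As $(1+j)^{N}\les_{p,N}2^{(1-p)j}$ for $j\ge 0$, it then suffices to prove
\begin{equation*}
\left\|\left\langle\psi_1,\beta\psi_2\right\rangle\right\| \les 2^{k_1}\,(1+k_2-k_1)^{N}\,\|\psi_1\|_{V_{\pm_1}^2}\,\|\psi_2\|_{V_{\pm_2}^2}
\end{equation*}
for some fixed $N$; that is, we are allowed a logarithmic loss in the frequency gap, which is exactly the $\varepsilon$-loss mentioned in the introduction.

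The core is the analogous bilinear estimate for free Klein--Gordon waves: with $s_j\in\{+,-\}$ the signs dictated by $\pm_j$ and the conjugation in $\langle\cdot,\beta\cdot\rangle$,
\begin{equation*}
\big\|\big\langle e^{\mp_1 it\brad}f_1,\ \beta\,e^{\mp_2 it\brad}f_2\big\rangle\big\| \les 2^{k_1}(1+k_2-k_1)^{N}\|f_1\|_{L_x^2}\|f_2\|_{L_x^2},\qquad \mathrm{supp}\,\widehat{f_j}\subset\{|\xi|\sim2^{k_j}\}.
\end{equation*}
To prove this I would pass to the space--time Fourier side and write the product as the convolution of the two measures supported on the characteristic surfaces $\tau=s_j\langle\xi\rangle$; a Cauchy--Schwarz in the convolution variable and Plancherel then bound the left side by $J^{1/2}\|f_1\|_{L_x^2}\|f_2\|_{L_x^2}$ with
\begin{equation*}
J=\sup_{\tau,\xi}\int_{\mathbb R^2}\delta\big(\tau-s_1\langle\eta\rangle-s_2\langle\xi-\eta\rangle\big)\,\big\|\Pi_{s_1}(\eta)\,\beta\,\Pi_{s_2}(\xi-\eta)\big\|^{2}\,\mathbf{1}_{\{|\eta|\sim2^{k_1},\,|\xi-\eta|\sim2^{k_2}\}}\,d\eta .
\end{equation*}
The null structure of the Dirac form enters through the symbol bound $\|\Pi_{s_1}(\eta)\beta\Pi_{s_2}(\xi-\eta)\|\les\theta+\langle\eta\rangle^{-1}+\langle\xi-\eta\rangle^{-1}$ ($\theta$ the relevant angle between the two input frequencies); one estimates $J$ by the co-area formula, decomposing the annulus $\{|\eta|\sim2^{k_1}\}$ into tubes around the level curves of $\eta\mapsto s_1\langle\eta\rangle+s_2\langle\xi-\eta\rangle$, and using the smallness of $\theta$ and of $\langle\eta\rangle^{-1}+\langle\xi-\eta\rangle^{-1}\les 2^{-k_1}$ to offset the degeneracy of the phase gradient along the (anti)parallel configurations --- in particular in the high--high$\to$low interaction (output frequency far below $2^{k_1}$). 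Summing over the dyadic scales costs at most $(1+k_2-k_1)^{N}$ and gives $J\les2^{2k_1}(1+k_2-k_1)^{2N}$.

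It remains to pass from free waves to $V_{\pm_j}^2$, which I would do via a dyadic modulation decomposition $\psi_j=\sum_{d}Q_d^{\pm_j}\psi_j$. The contribution of the large modulations $2^{d}$ is handled by the modulation estimate $\|Q_d^{\pm_j}\psi_j\|\les2^{-d/2}\|\psi_j\|_{V_{\pm_j}^2}$ of \cite{tes2d,yang}, combined with Bernstein for the lower-frequency factor ($\|g\|_{L_x^\infty}\les2^{k_1}\|g\|_{L_x^2}$) and H\"older, and these sum to $2^{k_1}\|\psi_1\|_{V_{\pm_1}^2}\|\psi_2\|_{V_{\pm_2}^2}$. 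The low-modulation contribution is reduced to the free-wave estimate above by the transfer principle (Proposition~\ref{transfer}) together with the logarithmic interpolation between the $U^p$ spaces and the embeddings of Lemmas~\ref{embedd}--\ref{duality}, in the spirit of \cite{tes2d}; this step incurs the factor $(1+k_2-k_1)^{N}$. Absorbing it as in the first paragraph finishes the proof.

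\smallskip\noindent\emph{Expected main difficulty.} The delicate point is the free-wave bilinear estimate in the resonant regime $k_1\approx k_2$, and especially the high--high$\to$low interaction: in two dimensions the phase gradient $|\nabla_\eta(s_1\langle\eta\rangle+s_2\langle\xi-\eta\rangle)|$ degenerates there and --- in contrast to three dimensions --- the Dirac null symbol does not fully compensate, which forces the logarithmic (equivalently, $\varepsilon$-power) loss and hence the restriction $p<1$. This is the same obstruction that prevents the $L_x^2$ theory from extending to three dimensions, as remarked in the introduction.
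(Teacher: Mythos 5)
Your approach is genuinely different from the paper's and substantially overcomplicates the lemma. The paper's proof is a three-line Strichartz argument: for an admissible Klein--Gordon Strichartz pair $(q,r)$ with $\frac1q=\frac12-\frac1r$, $2<q<\infty$, the frequency-localized estimate $\|e^{\mp it\brad}P_k^1 f\|_{L_t^qL_x^r}\les 2^{2k/q}\|f\|_{L_x^2}$, the transfer principle (Proposition~\ref{transfer}), and the embedding $V^2\hookrightarrow U^q$ give $\|\psi_{1}\|_{L_t^qL_x^r}\les 2^{2k_1/q}\|\psi_1\|_{V_{\pm_1}^2}$ and, applying the \emph{dual} pair $(r,q)$, $\|\psi_{2}\|_{L_t^rL_x^q}\les 2^{(1-2/q)k_2}\|\psi_2\|_{V_{\pm_2}^2}$. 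H\"older with $\frac1q+\frac1r=\frac12$ in both $t$ and $x$ then gives the claim with $p=2/q$. No Fourier-side restriction analysis, no null structure, no modulation decomposition, no $U^p$ interpolation.

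Two further points worth flagging. First, you have misidentified the source of the restriction $0<p<1$: it is simply the unavailability of the endpoint Strichartz pair $(q,r)=(2,\infty)$ (and its dual), not a high--high$\to$low resonance obstruction or a null-form deficiency. The null structure is needed only in Proposition~\ref{prop-bilinear}, which the paper quotes from \cite{yang,choleeoz}; Lemma~\ref{ltwo-esti} is deliberately a crude estimate in which $\Pi_{\pm_j}(D)$ plays no role beyond being bounded on $L^2$. Second, the free-wave estimate you propose to establish, $\|\langle e^{\mp_1it\brad}f_1,\beta e^{\mp_2it\brad}f_2\rangle\|\les 2^{k_1}(1+k_2-k_1)^N\|f_1\|_{L_x^2}\|f_2\|_{L_x^2}$ for $k_1\le k_2$, is strictly stronger than what the lemma asserts; it is an endpoint-type bound that would itself require the full machinery of \cite{tes2d} (convolution of surface measures, co-area slicing, symbol bounds, modulation decomposition, logarithmic $U^p$ interpolation), none of which you actually carry out, so as written your proof has substantial gaps exactly at the points you describe as ``the core.'' In short, the lemma follows from soft tools, and the hard bilinear analysis you outline is the content of a different proposition in the paper.
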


\begin{proof}[Proof of Lemma \ref{ltwo-esti}] For the proof we use the well-known Strichartz estimates (for instance see\cite{choz, chozxi}):
Suppose $(q,r)$ satisfies that $2 \le r < \infty$ and $\frac1q = \frac12 -\frac1r$. Then
\begin{align}\label{KG-stri}
\|\epm \pkm f\|_{L_t^q L_x^r} \les \left< 2^k\right>^{\frac2q}\|P_k^1 f\|_{L_x^2}.
\end{align}		

From  \eqref{KG-stri}, Proposition \ref{transfer}, and Lemma \ref{embedd}   we get
	\begin{align}\label{stri-trans}
	\| \pkm \psi\|_{L_t^q L_x^r} \les 2^{\frac{2k}q}\|\psi\|_{U_{\pm}^q} \les 2^{\frac{2k}q}\|\psi\|_{V_{\pm}^2}
	\end{align}
	for $2 < q < \infty$. Hence, by \eqref{stri-trans} we get
	\begin{align*}
	\left\|\left<\psi_1, \beta \psi_2\right> \right\| &\les \|\psi_1\|_{L_t^q L_x^r} \|\psi_2\|_{L_t^r L_x^q} \les 2^{\frac2q k_1  + \left( 1- \frac2q\right)k_2 } \|\psi_1\|_{U_{\pm_1}^q} \|\psi_2\|_{U_{\pm_2}^r}\\
	&\les 2^{\frac2q k_1  + \left( 1- \frac2q\right)k_2 }\|\psi_1\|_{V_{\pm_1}^2} \|\psi_2\|_{V_{\pm_2}^2}.
	\end{align*}
By setting $p = \frac2q$ the proof finishes.
\end{proof}

The following proposition is key estimate to be used in high-high-low interaction.
\begin{prop}[Proposition 3.6, 3.7 of \cite{yang} and  Proposition 3.7, 3.9 of \cite{choleeoz}]\label{prop-bilinear}
	Let $ \Pi_{\pm_j}(D) P_{k_j} \psi_j = \psi_{j}\in V_{\pm_j}^2$.	Assume that $ k_1, k_2 \ge 0, \,k \in \mathbb{Z} $ and  that $2^k \ll 2^{k_1} \sim 2^{k_2}$.  Then we get the following:
	\begin{enumerate}
		\item[$(i)$] If $\pm_1 = \pm_2$, $\|P_k \langle \psi_1 , \beta \psi_2 \rangle \| \les 2^{k - \frac{k_1}2} \|\psi_1\|_{V_{\pm_1}^2} \|\psi_2\|_{V_{\pm_2}^2}.$
		\item[$(ii)$] If $\pm_1 \neq \pm_2$, $\|P_k \langle \psi_1 , \beta \psi_2 \rangle \| \les 2^{\frac k2} \|\psi_1\|_{V_{\pm_1}^2} \|\psi_2\|_{V_{\pm_2}^2}.$
	\end{enumerate}
\end{prop}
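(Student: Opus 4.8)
The plan is to reduce the bilinear estimate to the spacetime-Fourier side and exploit the null structure hidden in the spinor bracket $\langle \psi_1, \beta \psi_2\rangle$ together with the transference principle. First I would recall that for the free half-wave flows $e^{\mp_j it\langle D\rangle}$ the bracket $\langle \Pi_{\pm_1}(\xi_1) f_1, \beta \Pi_{\pm_2}(\xi_2)f_2\rangle$ carries an angular gain: writing $\theta$ for the angle between $\xi_1$ and $\xi_2$, one has a bound of the form $|\langle \Pi_{\pm_1}(\xi_1)e_1, \beta\Pi_{\pm_2}(\xi_2)e_2\rangle| \lesssim \theta^2 + (\langle \xi_1\rangle^{-1}+\langle\xi_2\rangle^{-1})$ in the $\pm_1=\pm_2$ case, and $\lesssim 1$ (no gain beyond the trivial one) in the $\pm_1\neq\pm_2$ case — this dichotomy is exactly what produces the two different powers $2^{k-k_1/2}$ versus $2^{k/2}$ in (i) and (ii). Since $\psi_j \in V_{\pm_j}^2$ and $V^2$-functions are superpositions of free solutions (via the atomic structure and Proposition \ref{transfer}), I would first prove the estimate with $\psi_j$ replaced by free solutions $e^{\mp_j it\langle D\rangle}f_j$ and then transfer.

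Next, for the free-solution model case, the output $P_k\langle u_1,\beta u_2\rangle$ is frequency-localized to $|\xi|\sim 2^k$ while the inputs live at $|\xi_j|\sim 2^{k_j}$ with $2^k\ll 2^{k_1}\sim 2^{k_2}$; geometry forces $\xi_1\approx -\xi_2$, so $\theta\approx \pi$ when $\pm_1=\pm_2$ is actually the opposite-sign-in-frequency configuration — more carefully, in the half-wave setting the relevant smallness is of the \emph{modulation}/resonance function $-\tau \mp_1\langle\xi_1\rangle\mp_2\langle\xi_2\rangle$ combined with the transverse angle. I would split into the resonant region (small modulation) and the non-resonant region. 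In the non-resonant region one integrates by parts in $t$ or uses the $V^2\hookrightarrow U^q$ embedding together with the Strichartz estimate \eqref{KG-stri} as in Lemma \ref{ltwo-esti}; in the resonant region one uses the null-form angular gain, which for $\pm_1=\pm_2$ gives the extra factor $\theta\sim 2^{(k-k_1)/2}$ (from $\theta^2\sim 2^{k-k_1}$, by the law of cosines relating $2^k=|\xi_1+\xi_2|$ to $\theta$), and for $\pm_1\neq\pm_2$ gives only the trivial $L^2$-bilinear (Klainerman–Machedon / $L^4$-Strichartz) bound $2^{k/2}$. This is essentially the content of Propositions 3.6–3.7 of \cite{yang} and 3.7, 3.9 of \cite{choleeoz}, which I am allowed to invoke; so strictly the proof would consist of quoting those, checking that the hypotheses ($V^2$-bounded, dyadic separation $2^k\ll 2^{k_1}\sim 2^{k_2}$) match verbatim, and noting that the energy projections $\Pi_{\pm_j}(D)$ make the null structure available.

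Concretely I would organize the write-up as: (1) state the pointwise null-structure estimate for the symbols $\Pi_{\pm_1}(\xi_1)\beta\Pi_{\pm_2}(\xi_2)$ — the $\pm_1=\pm_2$ case gains $|\xi_1/|\xi_1| + \xi_2/|\xi_2||$, the mixed case does not; (2) expand $P_k\langle u_1,\beta u_2\rangle$ on the Fourier side for free $u_j$, insert this bound, and reduce to a scalar bilinear $L^2_{t,x}$ estimate for $e^{\mp_1 it\langle D\rangle}(\text{something}_1)\cdot e^{\mp_2it\langle D\rangle}(\text{something}_2)$ with the extra symbol weight; (3) run the standard bilinear interaction estimate (Klainerman–Machedon type in $2d$, or thickened-sphere $L^2$ convolution bounds) to produce $2^{k-k_1/2}$ resp. $2^{k/2}$; (4) transfer from free solutions to $V_{\pm_j}^2$ via Proposition \ref{transfer} and Lemma \ref{embedd}. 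The main obstacle is step (3) in case (i): obtaining the sharp $2^{-k_1/2}$ rather than a lossy power requires the full strength of the bilinear restriction/convolution estimate for the (curved, shifted) Klein–Gordon characteristic surfaces and the precise angular Whitney decomposition, not merely Strichartz — this is the one place where naive Hölder plus \eqref{KG-stri} would lose a power of $2^k$ and where one genuinely needs the transversality/null-form mechanism from \cite{yang, choleeoz}.
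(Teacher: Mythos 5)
The paper itself gives no proof of this proposition: it is stated as a cited result, attributed to Propositions 3.6--3.7 of \cite{yang} and Propositions 3.7, 3.9 of \cite{choleeoz}, and the authors simply invoke it as a black box. Your explicit observation that ``strictly the proof would consist of quoting those'' references and checking the hypotheses match is therefore exactly the paper's (non-)approach, and your supplementary sketch of the underlying mechanism --- the null-structure bound on $\Pi_{\pm_1}(\xi_1)\beta\Pi_{\pm_2}(\xi_2)$ distinguishing the two sign cases, reduction to free solutions plus transference, and a modulation/angular decomposition feeding a bilinear $L^2$ restriction estimate for the Klein--Gordon cone --- accurately reflects what those references do. The only slight imprecision is in your heuristic geometry (the phrase ``$\theta\approx\pi$ when $\pm_1=\pm_2$ is actually the opposite-sign-in-frequency configuration'' muddles the angle: the relevant small angle in case (i) is between $\xi_1$ and $-\xi_2$, and the compensation between this angular gain and the bilinear transversality/modulation factor is what produces the exponent $k-\tfrac{k_1}{2}$), but since the paper does not attempt a self-contained argument either, this does not constitute a gap in matching the paper.
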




\section{Proof of Theorem \ref{mainthm}}
\newcommand{\x}{X_{\pm}}

We prove Theorem \ref{mainthm} by contraction argument.
Let us define Banach spaces $\x$ and $X_{\pm, p}$ by
$$
\x := \left\{ \phi \in C(\mathbb{R} ; L_x^2) : \|\phi\|_{\x} := \left( \sum_{k \in \mathbb{Z}} \|\pkm \phi\|_{U_{\pm}^2}^2\right)^{\frac12} < \infty \right\}
$$
and $X_{\pm, p} = \{\psi = \chi_{[0,\infty)}(t)\phi : \phi \in \x \}$, respectively.
Then by the decomposition $\psi = \psi_+ + \psi_-$, where $\psi_{\pm} = \Pi_{\pm}(D)\psi$, we define a complete metric space $X_p(\de)$ as
$$
X_p(\de) := \left\{ \psi \in X_{\pm, p} : \|\psi\|_{X} := \|\psi_+\|_{X_+} + \|\psi_-\|_{X_-} \le \de \right\}
$$
with metric $\mathbf d(\psi,\phi) := \|\psi - \phi\|_{X}$ and a map $\mathcal{N}$ defined by
\begin{align*}
\mathcal{N}(\psi) = \sum_{\pm} \left[\chi_{[0,\infty)}(t)e^{\mp it\brad}\Pi_{\pm}(D)\psi_0 + i \sum_{\pm_j, j = 1,2,3} N_{\pm}(\psi_{\pm_1},\psi_{\pm_2},\psi_{\pm_3})(t)\right],
\end{align*}
where
\begin{align*}
N_{\pm}(\psi_{1},\psi_{2},\psi_{3})(t) = \int_0^t e^{\mp i(t-t')\brad}\Pi_{\pm}(D)[(V*\left< \psi_{1}, \beta\psi_{2}\right>)\beta\psi_{3}] d\,t'.
\end{align*}
Here $\sum_{\pm} A_{\pm}$ means that $A_+ + A_-$.


The linear part of $\mathcal N(\psi)$ can be estimated as follows:
\begin{align}\label{linear-est}
\left\|\chi_{[0,\infty)}\emp \Pi_{\pm}(D) \psi_0 \right\|_{\x}^2 = \sum_{k\in \mathbb{Z}} 2^{2sk} \left\|\chi_{[0,\infty)} \pkm \Pi_{\pm}(D) \psi_0 \right\|_{U_{\pm}^2}^2 \sim \|\psi_0\|_{H^{s}}^2.
\end{align}
For the nonlinear parts for $N_{\pm}(\psi)(t)$ we prove
\begin{prop}\label{nonlinear}
If $\psi_j \in X_{\pm_j, p}$, then we have
\begin{align*}
\|N_{\pm}(\psi_{1, \pm_1},\psi_{2, \pm_2},\psi_{3, \pm_3})\|_{\x} \les \prod_{j=1}^3\|\psi_{j, \pm_j}\|_{X_{\pm_j}}.
\end{align*}
\end{prop}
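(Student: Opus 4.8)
The plan is to estimate $N_\pm$ by duality: since $N_\pm(\psi_1,\psi_2,\psi_3)$ is (up to the cutoff $\chi_{[0,\infty)}$) the Duhamel term for the $e^{\mp it\langle D\rangle}$ flow, Lemma \ref{duality} gives
\begin{align*}
\|\pkm N_\pm(\psi_1,\psi_2,\psi_3)\|_{U_\pm^2} \les \sup_{\|v\|_{V_\mp^2}=1}\left|\int_0^\infty \left\langle \pkm[(V*\langle\psi_1,\beta\psi_2\rangle)\beta\psi_3],\, v\right\rangle_{L_x^2}\,dt\right|.
\end{align*}
After a Littlewood-Paley decomposition $\psi_j = \sum_{k_j}\psi_{j,k_j}$ and summing against a test function localized at frequency $\sim 2^k$, the main object to bound is
$\sum_{k,k_1,k_2,k_3}\|\widehat V\|_{L^\infty(|\xi|\sim 2^{k_0})}\|\langle\psi_{1,k_1},\beta\psi_{2,k_2}\rangle\|\,\|\psi_{3,k_3}\|_{L_t^\infty L_x^2}\,\|v_k\|$ where $k_0$ is the output frequency of the quadratic form $\langle\psi_1,\beta\psi_2\rangle$ (so $2^{k_0}\les 2^{\min(k_1,k_2)}$ in the high-high case and $2^{k_0}\sim 2^{\max(k_1,k_2)}$ otherwise) and $k_0$ must also be $\sim 2^{\max(k,k_3)}$ by convolution. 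The crucial input is that $\widehat V(\xi)=(b^2+|\xi|^2)^{-1}$, so $\|\widehat V\|_{L^\infty(|\xi|\sim 2^{k_0})}\sim \min(1,2^{-2k_0})$, which gives a genuine $2^{-2k_0}$ gain at high frequencies — this is the replacement for Tesfahun's $\varepsilon$-loss.

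The argument then splits into the standard interaction cases according to the relative sizes of $k_1,k_2,k_3$ (and hence the forced size of the output). First I would handle the high-high-low-type interaction in the quadratic form, i.e.\ $2^{k_1}\sim 2^{k_2}\gg 2^{k_0}$: here Proposition \ref{prop-bilinear} bounds $\|P_{k_0}\langle\psi_{1,k_1},\beta\psi_{2,k_2}\rangle\|$ by either $2^{k_0-k_1/2}\|\psi_1\|_{V^2}\|\psi_2\|_{V^2}$ (equal signs) or $2^{k_0/2}\|\psi_1\|_{V^2}\|\psi_2\|_{V^2}$ (opposite signs). Combined with the $2^{-2k_0}$ decay of $\widehat V$, $\|v_k\|\les\|v_k\|_{V_\mp^2}$, $\|\psi_{3,k_3}\|_{L_t^\infty L_x^2}\les\|\psi_{3,k_3}\|_{U_\mp^2}$, and the Cauchy-Schwarz in the frequency sums (using the $\ell^2$ structure built into $\|\cdot\|_{X_\pm}$), the $k_0$-sum converges absolutely because $\sum_{k_0\les k_1}2^{-2k_0}2^{k_0}<\infty$ (opposite signs) resp.\ $\sum_{k_0\les k_1}2^{-2k_0}2^{k_0-k_1/2}<\infty$ (equal signs). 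Next, the case $2^{k_0}\sim 2^{\max(k_1,k_2)}$ (one of the inputs dominates the quadratic output): here I use Lemma \ref{ltwo-esti} with a suitable $0<p<1$, which gives $\|\langle\psi_{1,k_1},\beta\psi_{2,k_2}\rangle\|\les 2^{pk_1+(1-p)k_2}\|\psi_1\|_{V^2}\|\psi_2\|_{V^2}$; pairing this with $\widehat V$'s decay and balancing $p$ against the other frequency sums gives summability. The remaining low-output / low-input configurations are easier since then $\widehat V\sim 1$ but at least one spatial frequency is small, producing an extra summable factor, or one can simply use $\|V*f\|_{L_x^2}\les\|V\|_{L_x^1}\|f\|_{L_x^2}$ together with Lemma \ref{ltwo-esti}.

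Throughout, the mechanism for turning an $L^2_{t,x}$ bilinear bound into a bound on the full trilinear form is: estimate the two ``rough'' factors $\langle\psi_1,\beta\psi_2\rangle$ and the test function $v_k$ in $L^2_{t,x}$ by Cauchy-Schwarz in $t$, and put the third spinor $\psi_{3,k_3}$ in $L^\infty_t L^2_x$, using $V^2\hookrightarrow U^q\hookrightarrow L^\infty_t L^2_x$ from Lemma \ref{embedd}; the $V^2$-norms are in turn controlled by the $U^2_{\pm_j}$-norms (hence by $\|\psi_j\|_{X_{\pm_j}}$) via $U^2\hookrightarrow V^2$. The main obstacle is the high-high-low interaction precisely because there the quadratic null form can only be estimated with a $2^{k_0/2}$ (not $2^{k_0}$) gain in the opposite-sign case, so one genuinely needs the full $2^{-2k_0}$ decay of the 2d Yukawa symbol — a potential decaying only like $|\xi|^{-1}$ would be insufficient, which is exactly why, as remarked in the introduction, the method does not extend to three dimensions. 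A secondary technical point is bookkeeping the $P_k^1$ operators (which lump all frequencies $\le 1$ together) so that the $k=0$ ``low-frequency block'' is treated uniformly; there one has $\|\widehat V\|_{L^\infty}\les 1$ and Lemma \ref{ltwo-esti} (or Young's inequality) with the $\ell^2$ summation over the companion indices suffices. Once Proposition \ref{nonlinear} is established, Theorem \ref{mainthm} follows from the contraction mapping principle on $X_p(\delta)$ with $\delta$ and $\|\psi_0\|_{L_x^2}$ small, together with \eqref{linear-est} at $s=0$.
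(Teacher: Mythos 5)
Your overall strategy --- duality, Littlewood--Paley decomposition, exploiting the $\langle 2^k\rangle^{-2}$ decay of the 2d Yukawa symbol, Proposition~\ref{prop-bilinear} for the high-high-low interaction and Lemma~\ref{ltwo-esti} elsewhere --- matches the paper, and you correctly identify why the full $|\xi|^{-2}$ decay is needed and why 3d is out of reach. But the central ``mechanism'' you propose for reducing to bilinear $L^2_{t,x}$ estimates does not hold together. You write that the object to control is
$\|\widehat V\|_{L^\infty}\,\|\langle\psi_{1,k_1},\beta\psi_{2,k_2}\rangle\|_{L^2_{t,x}}\,\|\psi_{3,k_3}\|_{L^\infty_t L^2_x}\,\|v_k\|_{L^2_{t,x}}$,
and that $\|v_k\|_{L^2_{t,x}}\les\|v_k\|_{V^2}$. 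This last inequality is false: Lemma~\ref{embedd} gives $V^2\hookrightarrow L^\infty_t L^2_x$, not $L^2_t L^2_x$, and a time-constant $L^2_x$-valued function has finite $V^2$ norm but infinite $L^2_{t,x}$ norm. Moreover, even setting that aside, the H\"older split in $x$ does not close: three factors estimated in $L^2_x$ give exponents $\tfrac12+\tfrac12+\tfrac12>1$, so a Bernstein factor would be forced, which you do not account for. The test function $v$ cannot be discarded in $L^2_{t,x}$; its normalization $\|v\|_{V^2}=1$ must enter through a genuine bilinear estimate.

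The repair --- and the paper's actual argument --- is to treat the duality test function $\psi_4:=v$ symmetrically with $\psi_3$: after Parseval across the convolution one bounds the quadrilinear form by
$\langle 2^k\rangle^{-2}\,\|P_k\langle\psi_{1,k_1},\beta\psi_{2,k_2}\rangle\|_{L^2_{t,x}}\,\|\widetilde P_k\langle\beta\psi_{3,k_3},\psi_{4,k_4}\rangle\|_{L^2_{t,x}}$,
i.e.\ \emph{two} bilinear $L^2_{t,x}$ null-form factors, each controlled by Proposition~\ref{prop-bilinear} (when that pair is in a high-high-low configuration) or Lemma~\ref{ltwo-esti} (otherwise). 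Only then are all four $V^2$ norms available, and only then do the dyadic sums over $k,k_1,k_2,k_3,k_4$ close, after Cauchy--Schwarz in the $\ell^2$-structured index $k_4$ and absolute summability in $k$ coming from $2^k\langle 2^k\rangle^{-2}$. With this correction your case analysis (high-high-low in $(\psi_1,\psi_2)$, high-low, and the low-frequency block where $\widehat V\sim 1$) is the right one, but as written the reduction to a product with a single spinor in $L^\infty_t L^2_x$ and the test function in $L^2_{t,x}$ does not prove the proposition.
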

\noindent The proof of Proposition \ref{nonlinear} is placed in the next section. 

If $\de$ is small enough that $C\delta^3 \le \frac\delta8$ and $\psi_0$ satisfies $C\|\psi_0\|_{L_x^2} \le \frac\delta2$, Proposition \ref{nonlinear} together with linear estimate \eqref{linear-est} leads us to
\begin{align*}
\|\mathcal{N}(\psi)\|_{X} = \sum_{\pm}\|\Pi_{\pm}\mathcal N(\psi)\|_{X_{\pm}} \le C(\|\psi_0\|_{L_x^2} + \|\psi\|_X^3) \le \de
\end{align*}
where $\|\phi\|_X:= \|\phi_+\|_{X_+} + \|\phi_-\|_{X_-}$.
This yields that $\mathcal N$ is a self-mapping on $X_p(\delta)$. In particular, we get
\begin{align*}
\mathbf d \Big(\mathcal{N}(\psi), \mathcal{N}(\phi)\Big) = \|\mathcal{N}(\psi) - \mathcal{N}(\phi)\|_{X} &\le C\left(\|\psi\|_{X}+ \|\phi\|_{X}\right)^2 \|\psi- \phi\|_{X} \le 4C\de^2\|\psi- \phi\|_{X} \le \frac12\mathbf d(\psi, \phi).
\end{align*}
Hence $\mathcal N: X_p(\delta) \to X_p(\delta)$ is a contraction mapping for sufficiently small $\de$ and then we get a unique solution $\psi_p \in L^\infty([0, \infty); L_x^2)$ to \eqref{maineq}. The time continuity and continuous dependency on data follow readily from the formula $\psi_p = \mathcal N(\psi_p)$ and Proposition \ref{nonlinear}. By the time symmetry of \eqref{maineq} we also obtain a unique solution $\psi_n \in C((-\infty, 0], L_x^2)$ with the continuous dependency on data. Defining $\psi = \psi_p + \psi_n$, we get the global well-posedness of \eqref{maineq}.

Now we move onto the scattering property of \eqref{maineq-decom}. Since the backward scattering can be treated similarly to the forward one, we omit its proof. For $k \ge 0$ let us define
$$
\vp_\pm:= \epm P_k^1 \mathcal{N}_{\pm}(\psi),
$$
where $\mathcal{N}_{\pm}(\psi) = \lim_{t\to \infty}\sum_{\pm_j} N_{\pm}(\psi_{\pm_1},\psi_{\pm_2},\psi_{\pm_3})(t)$. Then Lemma \ref{embedd} shows that
$$
\vp_\pm \in V_{\pm}^2.
$$
Since $\sum_{k \ge 0}  \|\vp_\pm\|_{V_\pm^2} \les 1$, we have
$$
\phi_\pm :=\lim_{t \to \infty}\vp_\pm \in L_x^2
$$
and
$$
\|\psi_\pm(t) - \emp \phi_\pm\|_{L_x^2}  \xrightarrow{t \to \infty} 0.
$$
This completes the proof of scattering part.

\section{Proof of Proposition \ref{nonlinear}}
By duality we obtain
\begin{align*}
 & \left\|P_{k_4}^1 \int_0^t e^{\mp i(t-t')\brad}\Pi_{\pm}(D)[(V*\left< \psi_{1,\pm_1}, \beta\psi_{2,\pm_2}\right>)\beta\psi_{3,\pm_3}] dt' \right\|_{U_{\pm}^2}\\
 & \qquad\qquad =\left\|P_{k_4}^1 \int_0^t e^{\pm it'\brad}\Pi_{\pm}(D)[(V*\left< \psi_{1,\pm_1}, \beta\psi_{2,\pm_2}\right>)\beta\psi_{3,\pm_3}] dt' \right\|_{U^2}\\
 & \qquad\qquad = \sup_{\begin{subarray}{ll} \|\phi\|_{V^2}=1 \\ \;\;\phi \in C_0^\infty  \end{subarray}}\left| \iint (V*\left< \psi_{1,\pm_1}, \beta\psi_{2,\pm_2}\right>)\left<\beta\psi_{3,\pm_3}, \Pi_{\pm}(D)P_{k_4}^1 e^{\mp it\brad}\phi\right> dt dx\right|\\
& \qquad\qquad = \sup_{\|\psi_4\|_{V_{\pm_4}^2}=1}\left| \iint (V*\left< \psi_{1,\pm_1}, \beta\psi_{2,\pm_2}\right>)\left<\beta\psi_{3,\pm_3}, P_{k_4}^1 \psi_{4,\pm_4}\right> dt dx\right|.
\end{align*}
Then by dyadic decomposition we have
\begin{align*}
&\|N_{\pm_4}(\psi_{1, \pm_1},\psi_{2, \pm_2},\psi_{3, \pm_3})\|_{X_{\pm_4}}^2\\
&\qquad= \sum_{k_4 \in \mathbb{Z}}  \| P_{k_4}^1 N_{\pm_4}(\psi_{1, \pm_1},\psi_{2, \pm_2},\psi_{3, \pm_3})\|_{U_{\pm_4}^2}^2\\
&\qquad \les  \sum_{k_4 \in \mathbb{Z}}\left( \sup_{\|\psi_4\|_{V_{\pm_4}^2}=1} \sum_{k,k_1,k_2,k_3 \in \mathbb{Z}}  \left| \iint P_k \left(V*\left< \psi_{1, \pm_1, k_1}, \beta\psi_{2, \pm_2, k_2}\right>\right) \widetilde{P_k}(\left<\beta\psi_{3, \pm_3, k_3},  \psi_{4, \pm_4, k_4}\right>) dt dx\right| \right)^{2}\\
&\qquad \les \sum_{k_4\in\mathbb{Z}} \left( \sup_{\|\psi_4\|_{V_{\pm_4}^2}=1}( I_1 + I_2 + I_3) \right)^2,
\end{align*}
where $\psi_{j, \pm_j, k_j} = P_{k_j}^1\Pi_{\pm_j}(D)\psi_j$ and
\begin{align*}
I_1 = \sum_{\begin{subarray}{c} k_1, k_2 \in \mathbb Z\\ 2^k \ll 2^{k_3} \sim 2^{k_4}\end{subarray}}|\cdots|,\quad I_2 = \sum_{\begin{subarray}{c} k_1, k_2 \in \mathbb Z\\ 2^{k_4} \sim 2^k  \gg 2^{k_3}\end{subarray}}|\cdots|,\quad I_3 = \sum_{\begin{subarray}{c} k_1, k_2 \in \mathbb Z \\ 2^k \sim 2^{k_3} \gtrsim 2^{k_4}\end{subarray}}|\cdots|.
\end{align*}
We subdivide $I_j$ as follows:
$$
I_1 =  I_{11} + I_{12} + I_{13} := \sum_{\begin{subarray}{c} 2^k \lesssim 2^{k_1} \sim 2^{k_2}\\ 2^k \ll 2^{k_3} \sim 2^{k_4}\end{subarray}}|\cdots| + \sum_{\begin{subarray}{c} 2^{k_2} \ll  2^{k_1} \sim 2^k   \\ 2^k \ll 2^{k_3}\sim 2^{k_4}\end{subarray}}|\cdots| + \sum_{\begin{subarray}{c} 2^{k_1} \ll 2^{k_2} \sim 2^k \\ 2^k \ll 2^{k_3}\sim 2^{k_4}\end{subarray}}|\cdots|,
$$

$$
I_2 = I_{21} + I_{22} + I_{23} := \sum_{\begin{subarray}{c} 2^k \lesssim 2^{k_1} \sim 2^{k_2}\\ 2^{k_4} \sim 2^k  \gg 2^{k_3}\end{subarray}}|\cdots| + \sum_{\begin{subarray}{c} 2^k \sim 2^{k_1} \gg 2^{k_2} \\ 2^{k_4} \sim 2^k  \gg 2^{k_3}\end{subarray}}|\cdots|  + \sum_{\begin{subarray}{c} 2^{k_1} \ll 2^{k_2} \sim 2^k \\ 2^{k_4} \sim 2^k  \gg 2^{k_3}\end{subarray}}|\cdots|,
$$

$$
I_3 = I_{31} + I_{32} + I_{33} := \sum_{\begin{subarray}{c} 2^k \lesssim 2^{k_1} \sim 2^{k_2}\\ 2^k \sim 2^{k_3} \gtrsim 2^{k_4}\end{subarray}}|\cdots| + \sum_{\begin{subarray}{c} 2^k \sim 2^{k_1} \gg 2^{k_2} \\ 2^k \sim 2^{k_3} \gtrsim 2^{k_4}\end{subarray}}|\cdots|  + \sum_{\begin{subarray}{c} 2^{k_1} \ll 2^{k_2} \sim 2^k \\ 2^k \sim 2^{k_3} \gtrsim 2^{k_4}\end{subarray}}|\cdots|.
$$
It suffices to show that for each $I_{ij} \;(i,j = 1, 2, 3)$
\begin{align}\label{est-ij}
\mathcal I_{ij} := \sum_{k_4\in\mathbb{Z}} \left( \sup_{\|\psi_4\|_{V_{\pm_4}^2}=1}[I_{ij}]^2\right) \lesssim \prod_{j = 1}^3\|\psi_{j, \pm_j}\|_{X_{\pm_j}}^2.
\end{align}

In fact, they can be handled as follows.
%
%
%
\newcommand{\xx}{X_{\pm}}
 By Proposition \ref{prop-bilinear} we have
\begin{align*}
\mathcal I_{11} &\les \sum_{k_4 \in \mathbb{Z}} \left( \sum_{\begin{subarray}{ll} 2^k \les 2^{k_1}\sim 2^{k_2}\\ 2^k \ll 2^{k_3}\sim 2^{k_4} \end{subarray}} \left<2^k\right>^{ -2}    \left\| P_k\left< \psi_{1,k_1}, \beta\psi_{2,k_2}\right>\right\|   \left\| \widetilde{P_k}\left<\beta\psi_{3,k_3}, \Pi_{\pm}(D)P_{k}^1 \psi_4\right>\right\|\right)^2 \\
&\les \sum_{k_4 \in \mathbb{Z}} \left( \sum_{\begin{subarray}{ll} 2^k \les 2^{k_1}\sim 2^{k_2}\\ 2^k \ll 2^{k_3}\sim 2^{k_4} \end{subarray}} 2^{k} \left<2^k\right>^{ -2}   \|\psi_{1,k_1}\|_{V_{\pm_1}^2}\|\psi_{2,k_2}\|_{V_{\pm_2}^2}\|\psi_{3,k_3}\|_{V_{\pm_3}^2}  \right)^2\\
&\les \|\psi_1\|_{X_{\pm_1}}^2\|\psi_2\|_{X_{\pm_2}}^2  \sum_{k_4 \in \mathbb{Z}} \|\psi_{3,k_4}\|_{V_{\pm_3}^2}^2  \left( \sum_{k \in \mathbb{Z}}  2^{ k}\left<2^k\right>^{ -2}  \right)^2\\
&\les \prod_{j=1}^3 \|\psi_j\|_{X_{\pm_j}}^2.
\end{align*}

Using Lemma \ref{ltwo-esti} and Proposition  \ref{prop-bilinear},
\begin{align*}
\mathcal I_{12} &\les  \sum_{k_4 \in \mathbb{Z}} \left( \sum_{\begin{subarray}{ll} 2^{k_2} \ll 2^{k_1}\sim 2^{k}\\ 2^{k} \ll 2^{k_3}\sim 2^{k_4} \end{subarray}} \left<2^k\right>^{ -2} 2^{\frac{k_1 +k_2}2} \|\psi_{1,k_1}\|_{V_{\pm_1}^2}\|\psi_{2,k_2}\|_{V_{\pm_2}^2}2^{\frac k2}\|\psi_{3,k_3}\|_{V_{\pm_3}^2}  \right)^2\\
&\les   \sum_{k_4 \in \mathbb{Z}} \left( \sum_{\begin{subarray}{ll}  2^{k_2}\ll 2^{k_1}\\ 2^{k_3} \sim 2^{k_4} \end{subarray}} 2^{\frac32 k_1}\left<2^{k_1}\right>^{ -2} 2^{\frac12 \left(k_2 - k_1 \right)} \|\psi_{1,k_1}\|_{V_{\pm_1}^2}\|\psi_{2,k_2}\|_{V_{\pm_2}^2}\|\psi_{3,k_3}\|_{V_{\pm_3}^2}  \right)^2\\
&\les  \|\psi_3\|_{X_{\pm_3}}^2 \left( \sum_{ 2^{k_2} \ll 2^{k_1} } 2^{\frac32 k_1}\left<2^{k_1}\right>^{ -2} 2^{\frac12 \left(k_2 - k_1 \right)} \|\psi_{1,k_1}\|_{V_{\pm_1}^2}\|\psi_{2,k_2}\|_{V_{\pm_2}^2}  \right)^2\\
&\les \prod_{j=1}^3 \|\psi_j\|_{X_{\pm_j}}^2.
\end{align*}
$\mathcal I_{13}$ and $\mathcal I_{21}$ can be handled by changing the role of $\psi_1, \psi_2$, and $(\psi_1, \psi_2), (\psi_4, \psi_3)$, respectively.

As for $\mathcal I_{22}$ we apply Lemma \ref{ltwo-esti} to both $(\psi_1, \psi_2)$ and $(\psi_3, \psi_4)$ to get
\begin{align*}
\mathcal I_{22} &\les  \sum_{k_4 \in \mathbb{Z}} \left( \sum_{\begin{subarray}{ll} 2^{k_2} \ll 2^{k_1}\sim 2^{k}\\ 2^{k_3} \ll 2^{k_4}\sim 2^{k} \end{subarray}} \left<2^k\right>^{ -2} 2^{\frac{k_1 + k_2}2} \|\psi_{1,k_1}\|_{V_{\pm_1}^2}\|\psi_{2,k_2}\|_{V_{\pm_2}^2}2^{\frac{k_3+k_4}2}\|\psi_{3,k_3}\|_{V_{\pm_3}^2}  \right)^2\\
&\les   \sum_{k_4 \in \mathbb{Z}} \|\psi_{1,k_4}\|_{V_{\pm_2}^2}^2 \left( \sum_{  2^{k_2}\ll 2^{k_4}} 2^{ k_4}\left<2^{k_4}\right>^{ -1} 2^{\frac12(k_2 - k_4)} \|\psi_{2,k_2}\|_{V_{\pm_1}^2}  \right)^2\\
&\qquad \qquad \qquad \quad   \times \left( \sum_{2^{k_3} \ll 2^{k_4} } 2^{ k_4}\left<2^{k_4}\right>^{ -1} 2^{\frac12 (k_3 - k_4)} \|\psi_{3,k_3}\|_{V_{\pm_3}^2}  \right)^2\\
& \les \prod_{j=1}^3 \|\psi_j\|_{X_{\pm_j}}^2.
\end{align*}
$\mathcal I_{23}$ is treated similarly by changing the role of $\psi_1, \psi_2$.
%
%
%
%
The estimates of $\mathcal I_{3j}$ are symmetric to those of $\mathcal I_{2j}$. We have only to change the role of $\psi_3$ and $\psi_4$. This completes the proof of Theorem \ref{mainthm}.

\section*{Acknowledgements}
This work was supported by NRF-2018R1D1A3B07047782 and NRF-2021R1I1A3A04035040(Republic of Korea).


\end{document}